\newtheorem{thr}{Theorem}[section]
\newtheorem{q}[thr]{Question}
\newtheorem{prop}[thr]{Proposition}
\newtheorem{conj}[thr]{Conjecture}
\theoremstyle{definition}
\newtheorem{defi}[thr]{Definition}
\newtheorem*{defi*}{Definition}
\newtheorem{examp}[thr]{Example}
\def\E{\mathbb{E}}
\newcommand*{\abs}[1]{\left\lvert #1\right\rvert}
\def\a{\alpha}
\newcommand*{\myproofname}{Proof}
\tikzstyle{P} = [draw, circle, black, fill, inner sep = 0pt, minimum width = 3pt]
\tikzstyle{every loop} = []
\newcommand{\tikzHind}{
  \begin{tikzpicture}[baseline, yshift=1pt]
    \path[use as bounding box] (-.15,-.1) rectangle (.6,.35);
    \draw (0.5,0) node[P] {} -- (0,0) node[P] {} edge[-,in = 45, out = 135, loop] ();
  \end{tikzpicture}
}
\title{Counterexamples to conjectures on the occupancy fraction of graphs}
\date{}
\author{
Stijn Cambie \thanks{Department of Computer Science, KU Leuven Campus Kulak-Kortrijk, 8500 Kortrijk, Belgium. Supported by FWO grants with grant numbers 1225224N and 1222524N. \protect\href{mailto:stijn.cambie@hotmail.com}{\protect\nolinkurl{stijn.cambie@hotmail.com}} and \protect\href{jorik.jooken@kuleuven.be}{\protect\nolinkurl{jorik.jooken@kuleuven.be}}}
\and
Jorik Jooken \footnotemark[1] 
}
\begin{document}
\maketitle
\begin{abstract}
    The occupancy fraction of a graph is a (normalized) measure on the size of independent sets under the hard-core model, depending on a variable (fugacity) $\lambda.$ We present a criterion for finding the graph with minimum occupancy fraction among graphs with a fixed order, and disprove five conjectures on the extremes of the occupancy fraction and (normalized) independence polynomial for certain graph classes of regular graphs with a given girth. 
\end{abstract}

\section{Introduction}

We disprove five conjectures (\cite[conj.~2-5]{PP18} and~\cite[conj.]{CR18}), on the number of independent sets and the occupancy fraction of graphs, that have been open for five years\footnote{When considering arXiv, they are open for $7$ years}. This puts an end to failed proof searches.
For notation and definitions, the reader is referred to Subsection~\ref{subsec:not&def}.
The counterexamples indicate that finding the extremal graphs for the occupancy fraction among regular graphs with a given girth is a hard question in general, and the answer depends on the exact value of the fugacity.

The occupancy method was first introduced in Davies et al.~\cite{DJPR17}, where they extended the Kahn-Zhao theorem and its generalization.
The generalization of the Kahn-Zhao theorem (proven in steps by~\cite{Kahn01, GT04, Zhao10}) states that among $d$-regular graphs, the independence polynomial is maximized by $K_{d,d}$\footnote{after normalization, equivalently take the union of $K_{d,d}$s when $2d \mid n.$}. After taking the logarithm, this is equivalent with $\frac{\log P_G(\lambda)}{\abs{V(G)}}$ being maximized by $K_{d,d}$ (when restricting to $d$-regular graphs) for every $\lambda$ (the case where $\lambda=1$ gives the original Kahn-Zhao theorem). The latter result can be derived from integrating the occupancy fraction $\a_G(\lambda)= \frac{\lambda P'_G(\lambda)}{\abs{V(G)} P_G(\lambda)}$, which by~\cite{DJPR17} is maximized among $d$-regular graphs by $K_{d,d}$ as well, over $(0,1].$ 
We refer the interested reader to~\cite{DJPR17, Zhao17} for more information.

Perkins and Perarnau~\cite{PP18} studied the extrema of the occupancy fractions of $d$-regular graphs with a given girth condition.
The occupancy fraction $\alpha_G(\lambda)$ tends to $\frac{\alpha}{n}$ when $\lambda \to \infty.$

By Staton~\cite{Staton79}, it is known that for triangle-free cubic graphs the independence ratio $\frac{\alpha}{n}$ is at least $\frac 5{14}$.
Examples of graphs achieving equality include the generalized Petersen graph $P_{7,2}$ and the graph $G_{14}$ (see~\cref{fig:cubic_trianglefree}).
Their number of independent sets of a given size are given by
$\vec{i}(P_{7,2})=(1, 14, 70, 154, 147, 49)$ and $\vec{i}(G_{14})=(1, 14, 70, 154, 147, 48)$ respectively.
This comparison would immediately imply that~\cite[conj.]{CR18} and~\cite[Conj.~2]{PP18} are false.
It turns out that the natural adapted version is also incorrect and the two potential extreme examples, $P_{5,2}$ and $G_{14}$ do not result into the minimum over all possible values of $\lambda.$ The dodecahedral graph $DOD$ has a smaller occupancy fraction when $\lambda$ belongs to a certain interval.

In~\cref{sec:maxocfr}, we compute $\vec{i}(G)$ for the graphs we want to compare to disprove~\cite[Conj.~3, 4]{PP18}. From this, the polynomial $P_G(\lambda)$ and rational function $\a_G(\lambda)$ are immediately derived.
We end with an analytic comparison of functions.
In~\cref{sec:minocfr}, we prove a proposition that gives a sufficient condition for the inequality between the occupancy fractions of two graphs with the same order. The proof uses intermediate results that are proven in~\cref{sec:el_results_ocfr_sets} for sets in general.
By computing the critical graphs, we can determine the minimum occupancy fractions of triangle-free cubic and $4$-regular graphs with small order. By comparing with selected graphs with larger order,
we conclude that the extremal graphs are not (only) among the small ones (contrary to the intuition of~\cite[conj.~2-5]{PP18}).

All computer code of the verifications as well as an overview of the counterexamples can be found in the folder \url{https://github.com/JorikJooken/occupancyFraction}. We refer the interested reader to~\cref{sec:app1} for more details about the computer search that enabled us to find the counterexamples to the conjectures. 

\subsection{Notation and definitions.}\label{subsec:not&def}

In this paper, $G$ denotes a graph, and its order is denoted with $n$. We denote with $\a(G)$ the independence number of $G$, the number of vertices in a maximum independent set. We let $I(G)$ be the set of independent sets in $G$ and $\abs{I(G)}$ is the number of independent sets of the graph. Here the empty set is counted as well.

For a set $S,$ we consider the \emph{partition function} $P_S(\lambda)=\sum_{ s \in S} \lambda^s,$ which is a polynomial in the fugacity $\lambda$.
We define $\E_S(\lambda)=\frac{\lambda P'_S(\lambda)}{P_S(\lambda)}$.\footnote{$P'_S(\lambda)$ represents the derivative of $P_S(\lambda)$ with respect to $\lambda$}
This is the expected value when every element $s \in S$ is taken with a probability proportional to $\lambda^s$ and as such called the \emph{expected value} of the set $S$.
If the values of $S$ belong to $\{0,1,2, \ldots, k\},$ we call the list of non-negative integers representing the number of times (the multiplicity) each element appears in $S$, $(s_0, s_1, s_2,\ldots, s_k)$ the \emph{multiplicity-tuple} of $S$. Here $s_i$ denotes the multiplicity of $i$ in $S$.

\begin{defi}
    The number of independent sets with $k$ vertices in a graph $G$ is denoted with $i_k(G).$
\end{defi}

For a graph $G$ with independence number $k=\alpha(G)$, let $S$ be the set with multiplicity-tuple $\vec{i}(G)=(i_0(G), i_1(G), \ldots, i_{k}(G)).$
Then $P_G(\lambda)=P_S(\lambda)$ and $\a_G(\lambda)=\frac{1}{\abs{V(G)}}\E_S(\lambda)$ are the \emph{partition function} and \emph{occupancy fraction} of the graph $G$.

\section{Elementary results on expected value of sets}\label{sec:el_results_ocfr_sets}

One could expect that the expected value of a randomly drawn element according to a certain probability distribution, would decrease when the elements of the set decrease. It turns out this is not always the case in the hard-core model.

\begin{examp}
    Decreasing a value in a set, can increase the expectation.
    Let $S=\{0,2,4\}$ and $S'=\{0,1,4\}.$
    Then $\E_S(\lambda)=\frac{2\lambda^2+4\lambda^4}{1+\lambda^2+\lambda^4}<\frac{\lambda+4\lambda^4}{1+\lambda+\lambda^4}=\E_{S'}(\lambda)$ for small positive values of $\lambda$.
\end{examp}

While decreasing some elements does not necessarily lead to a decrease of the expected value, restricting the set to the smallest elements does.
When restricting to a subset of large elements, the expected value is larger.

\begin{prop}\label{prop:occupancy_subsets}
    Let $S$ be a set. Let $S=S_1 \cup S_2$ be a partition in some small and some large elements, i.e., where $\max S_1 \le \min S_2.$
    Then $\E_{S_1}(\lambda) \le \max S_1 \le \min S_2 \le \E_{S_2}(\lambda)$ for every $\lambda >0.$
    This implies that $\E_{S_1}(\lambda) \le\E_{S}(\lambda) \le \E_{S_2}(\lambda)$. 
\end{prop}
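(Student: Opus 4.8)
The plan is to recognize $\E_S(\lambda)$ as a genuine expected value and then lean on two soft facts: a weighted average of real numbers lies between the smallest and largest of them, and the expectation over a union is a convex combination of the expectations over the parts.

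First I would unfold the definition. Since $\lambda P'_S(\lambda)=\sum_{s\in S} s\lambda^s$, we have
\[
\E_S(\lambda)=\frac{\sum_{s\in S} s\lambda^s}{\sum_{s\in S}\lambda^s},
\]
which for every $\lambda>0$ is a weighted average of the elements of $S$ with strictly positive weights $\lambda^s$. Any such weighted average is bounded below by the minimum element and above by the maximum element. Applying this to $S_1$ yields $\E_{S_1}(\lambda)\le \max S_1$, and applying it to $S_2$ yields $\min S_2\le \E_{S_2}(\lambda)$. The middle inequality $\max S_1\le \min S_2$ is exactly the standing hypothesis that the partition separates small elements from large ones, so the four-term chain $\E_{S_1}(\lambda)\le \max S_1\le \min S_2\le \E_{S_2}(\lambda)$ follows immediately.

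For the concluding implication, the step that actually requires an observation is that $\E_S(\lambda)$ is a convex combination of the two partial expectations. Writing $W_j=P_{S_j}(\lambda)=\sum_{s\in S_j}\lambda^s>0$, the numerator of $\E_S$ splits as $\sum_{s\in S_1}s\lambda^s+\sum_{s\in S_2}s\lambda^s=W_1\E_{S_1}(\lambda)+W_2\E_{S_2}(\lambda)$, while the denominator is $W_1+W_2$. Hence
\[
\E_S(\lambda)=\frac{W_1}{W_1+W_2}\,\E_{S_1}(\lambda)+\frac{W_2}{W_1+W_2}\,\E_{S_2}(\lambda),
\]
a convex combination with strictly positive coefficients. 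Since a convex combination of two reals lies between them and $\E_{S_1}(\lambda)\le \E_{S_2}(\lambda)$ by the first part, we conclude $\E_{S_1}(\lambda)\le \E_S(\lambda)\le \E_{S_2}(\lambda)$.

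I do not expect a serious obstacle: the statement is essentially a repackaging of elementary averaging facts, and the only genuine idea is spotting the convex-combination decomposition above. The remaining care is bookkeeping, namely confirming $\lambda P'_S(\lambda)=\sum_{s} s\lambda^s$ so that $\E_S$ is indeed the stated average, checking that each $W_j>0$ for $\lambda>0$ so the convex combination is well defined, and tacitly assuming $S_1,S_2$ are nonempty (otherwise one partial expectation is undefined and the corresponding inequality is vacuous). It is worth noting that the separation hypothesis $\max S_1\le\min S_2$ is \emph{not} needed for the convex-combination identity, which holds for any partition; it is used only to order the two endpoints $\E_{S_1}(\lambda)\le\E_{S_2}(\lambda)$, which is what pins down which inequality points which way.
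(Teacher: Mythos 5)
Your proof is correct and follows exactly the same route as the paper's: the chain $\E_{S_1}(\lambda)\le\max S_1\le\min S_2\le\E_{S_2}(\lambda)$ via the fact that an expectation lies between the minimum and maximum, and then the convex-combination identity $\E_S(\lambda)=\frac{W_1}{W_1+W_2}\E_{S_1}(\lambda)+\frac{W_2}{W_1+W_2}\E_{S_2}(\lambda)$ to conclude. You simply spell out the bookkeeping (positivity of the weights, nonemptiness of the parts) that the paper leaves implicit.
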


\begin{proof}
    The first inequality is trivial, as the expectation is between the minimum and maximum.
    The second inequality is then a corollary of the first, since
    $\E_{S}(\lambda)=\frac{\lambda P'_{S_1}(\lambda) + \lambda P'_{S_2}(\lambda)}{P_{S_1}(\lambda)+P_{S_2}(\lambda)}$ is a convex combination of $\E_{S_1}(\lambda)$ and $\E_{S_2}(\lambda)$.
\end{proof}

Using this observation, we can provide a criterion that is sufficient to compare the expected value.

\begin{prop}\label{prop:maj->min_setversion}
    Let $A$ and $B$ be the sets with multiplicity-tuples respectively $(a_0,a_1, \ldots, a_k)$ and $(b_0,b_1, \ldots, b_k)$, whose elements are positive integers.
    The condition $\frac{a_i}{a_{i-1}} \le \frac{b_i}{b_{i-1}}$ for every $1 \le i \le k$ ensures that 
    $\E_A(\lambda) \le \E_B(\lambda)$ for every $\lambda \ge 0.$
\end{prop}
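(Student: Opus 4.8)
The plan is to recognize $\E_A(\lambda)$ as the mean of the integer-valued random variable that takes value $i$ with probability proportional to $a_i\lambda^i$, so that the hypothesis $a_i/a_{i-1}\le b_i/b_{i-1}$ is precisely a monotone-likelihood-ratio condition between the two distributions attached to $A$ and $B$. Rather than invoking any stochastic-dominance machinery, I would prove the inequality by clearing denominators, which converts it into a transparent coefficientwise statement.

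Concretely, writing $P_A(\lambda)=\sum_{i=0}^{k} a_i\lambda^i$ and $\lambda P_A'(\lambda)=\sum_i i\,a_i\lambda^i$ (and likewise for $B$), the target inequality $\E_A(\lambda)\le \E_B(\lambda)$ is equivalent, after multiplying through by the strictly positive quantities $P_A(\lambda)$ and $P_B(\lambda)$, to $\sum_{i,j}(j-i)\,a_i b_j\,\lambda^{i+j}\ge 0$. I would then symmetrize this double sum by pairing the term indexed by $(i,j)$ with the one indexed by $(j,i)$: the diagonal $i=j$ contributes nothing, and for $i<j$ the pair collapses to $(j-i)\,\lambda^{i+j}\bigl(a_i b_j-a_j b_i\bigr)$. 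Hence it suffices to show $a_i b_j-a_j b_i\ge 0$ for all $i<j$.

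This last inequality is exactly where the hypothesis enters. Since all entries are positive, the condition $a_i/a_{i-1}\le b_i/b_{i-1}$ rearranges to $b_{i-1}/a_{i-1}\le b_i/a_i$, so the sequence $b_i/a_i$ is nondecreasing in $i$; telescoping, for $i<j$ we get $b_i/a_i\le b_j/a_j$, that is $a_i b_j\ge a_j b_i$. Together with $j-i>0$ and $\lambda^{i+j}\ge 0$, every paired summand is nonnegative, so the symmetrized sum is nonnegative and the inequality holds for every $\lambda\ge 0$ (the boundary case $\lambda=0$ is covered since $P_A(0)=a_0>0$ and $P_B(0)=b_0>0$).

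I expect no genuine obstacle here: the only step carrying any content is the symmetrization, which replaces the asymmetric double sum by one governed by the pairwise comparison of $a_i b_j$ against $a_j b_i$; the remainder is bookkeeping plus the telescoping of the hypothesis into the monotonicity of $b_i/a_i$. The positivity assumption on the entries is used twice — to ensure the denominators are strictly positive so that cross-multiplication preserves the inequality, and to divide freely when rearranging the ratio condition. Note that \cref{prop:occupancy_subsets} is not needed for this direct argument, although one could alternatively derive the result by inductively splitting $A$ and $B$ into small and large parts and applying that proposition; the computation above seems cleaner.
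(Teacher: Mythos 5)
Your proof is correct, but it takes a genuinely different route from the paper. The paper's argument is structural: from the monotonicity of $b_i/a_i$ (the same rearrangement of the hypothesis you use) it writes the tuple $(b_0,\ldots,b_k)$ as a nonnegative combination $c_0(a_0,\ldots,a_k)+c_1(0,a_1,\ldots,a_k)+\cdots+c_k(0,\ldots,0,a_k)$, so that $\E_B(\lambda)$ becomes a convex combination of $\E_A(\lambda)$ and the suffix expectations $\E_{A_{\ge i}}(\lambda)$, each of which dominates $\E_A(\lambda)$ by \cref{prop:occupancy_subsets}. You instead clear denominators and symmetrize, reducing everything to the pairwise inequality $a_i b_j \ge a_j b_i$ for $i<j$ --- essentially the standard Chebyshev-type argument that a monotone likelihood ratio forces dominance of means. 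Your version is self-contained and purely algebraic (it does not invoke \cref{prop:occupancy_subsets} at all), which is arguably cleaner as a one-off proof; the paper's mixture decomposition buys reusability, since the same splitting-into-suffixes device together with \cref{prop:occupancy_subsets} is exactly what handles the case $\a(G)<\a(H)$ in \cref{prop:maj->min}, where the two multiplicity-tuples have different lengths and your telescoping (which needs all entries positive) does not apply verbatim. Both proofs consume the hypothesis identically, via the nondecreasing sequence $b_i/a_i$; only the mechanism converting that monotonicity into the expectation inequality differs.
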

\begin{proof}
    Due to the given condition, we have that 
    $$(b_0,b_1, \ldots, b_k) = c_0 (a_0,a_1, \ldots, a_k) + c_1(0,a_1,a_2, \ldots, a_k)+ \ldots + c_k( 0,\ldots,0,a_k),$$
    for some nonnegative reals $c_i.$
    We note that $\E_B(\lambda)$ is a convex combination of $\E_A(\lambda)$ and $\E_{A_{\ge i}}(\lambda)$ for $1 \le i \le k.$ Here $A_{\ge i} = \{a \mid a \in A, a \ge i\}$ is the subset of $A$ containing all elements of $A$ that are not smaller than $i.$
    By~\cref{prop:occupancy_subsets}, $\E_A(\lambda) \le \E_{A_{\ge i}}(\lambda)$ for every $i>0$ and the conclusion follows.
\end{proof}

\section{On maximum occupancy fraction}\label{sec:maxocfr}

In this section, we disprove~\cite[conj.~3 \& 4]{PP18}.

\begin{thr}
    There exists a cubic graph $G$ of girth at least $7$ for which $\a_G(\lambda) > \a_{H_{3,8}}(\lambda)$ for every $\lambda>17.$
\end{thr}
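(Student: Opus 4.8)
The plan is to exhibit an explicit cubic graph $G$ of girth at least $7$ and to compare its occupancy fraction against that of $H_{3,8}$ directly on the stated range $\lambda > 17$. The natural candidate is the $(3,8)$-cage (the Tutte-Coxeter / Levi graph) or another well-understood cubic graph of girth $\ge 7$; since $\a_G(\lambda) \to \frac{\a(G)}{n}$ as $\lambda \to \infty$, the first sanity check is that the chosen $G$ satisfies $\frac{\a(G)}{|V(G)|} > \frac{\a(H_{3,8})}{|V(H_{3,8})|}$, which is what makes the inequality plausible for large $\lambda$ in the first place. If the two graphs have different orders, I would pass to the normalized comparison by taking appropriate disjoint-union multiples so that both have a common order $n$, since $\a$ is already normalized by $|V(G)|$ and behaves well under disjoint unions (the independence polynomial multiplies, so $\a$ of a disjoint union is the corresponding average).

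First I would compute the independence-set count vector $\vec{i}(G) = (i_0(G), i_1(G), \ldots, i_{\a(G)}(G))$ for the chosen graph, exactly as the introduction promises to do in \cref{sec:maxocfr}; this can be done by direct enumeration (the graphs are small and explicit), and the analogous vector $\vec{i}(H_{3,8})$ is obtained the same way. From these I get the two partition functions $P_G(\lambda)$ and $P_{H_{3,8}}(\lambda)$ and hence the two rational functions $\a_G(\lambda) = \frac{\lambda P'_G(\lambda)}{|V(G)|\,P_G(\lambda)}$ and $\a_{H_{3,8}}(\lambda)$. The desired inequality $\a_G(\lambda) > \a_{H_{3,8}}(\lambda)$ is then equivalent, after clearing the (positive) denominators $P_G(\lambda)$ and $P_{H_{3,8}}(\lambda)$, to the positivity of a single univariate polynomial $Q(\lambda)$ with integer (or rational) coefficients on the interval $(17, \infty)$.

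The main work is therefore the analytic comparison flagged at the end of the introduction: showing $Q(\lambda) > 0$ for all $\lambda > 17$. I would handle this by analyzing the sign of $Q$ directly — examining its leading coefficient (which controls behavior as $\lambda \to \infty$ and is governed precisely by the independence-ratio comparison above, guaranteeing eventual positivity), and then bounding $Q$ away from zero on the tail. A clean way to pin down the threshold $17$ is to certify that $Q$ has no real root exceeding $17$, e.g.\ by a Sturm-sequence / root-isolation argument or by writing $Q(\lambda) = Q(17 + \mu)$ and checking that the shifted polynomial has nonnegative coefficients (a sufficient condition for positivity on $\mu > 0$). Because the coefficients are explicit integers coming from the $\vec{i}$-vectors, this reduces to a finite, machine-verifiable computation, consistent with the paper's stated reliance on computer code.

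The hard part will be twofold. First, the threshold $17$ is presumably tight-ish, so the crossover value of $\lambda$ where $\a_G - \a_{H_{3,8}}$ changes sign lies just below $17$; I must verify the largest real root of $Q$ is genuinely below $17$ rather than merely that $Q$ is eventually positive, and near the crossover the difference of the two occupancy fractions is small, so the comparison is delicate and must be done with exact arithmetic to avoid spurious sign errors. Second, identifying a graph $G$ whose independence ratio strictly exceeds that of $H_{3,8}$ while retaining girth $\ge 7$ is itself the crux — this is exactly the structural content of the counterexample, and it is where the computer search described in \cref{sec:app1} does the heavy lifting; once such a $G$ is in hand, the remaining polynomial positivity check is routine in principle, if tedious in practice.
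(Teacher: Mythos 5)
There is a genuine gap, and it sits exactly at the step you call the ``sanity check.'' You require a cubic girth-$\ge 7$ graph $G$ with $\frac{\a(G)}{|V(G)|} > \frac{\a(H_{3,8})}{|V(H_{3,8})|}$, and you claim this strict inequality ``is what makes the inequality plausible for large $\lambda$ in the first place.'' No such graph exists: every $d$-regular graph satisfies $\a(G) \le n/2$ (counting edges leaving an independent set), and $H_{3,8}$, being bipartite on $30$ vertices, already attains the maximum ratio $\a/n = 1/2$. So a search guided by your criterion returns nothing. Worse, your named candidate, the $(3,8)$-cage (Tutte--Coxeter graph), \emph{is} $H_{3,8}$ itself, so that suggestion is circular. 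The paper's actual counterexample $G_{38}$ (a bipartite cubic graph of girth $\ge 7$ on $38$ vertices, with $\vec{i}$-vector ending $\ldots,38,2$) has the \emph{same} independence ratio $1/2$ as $H_{3,8}$; indeed, since $i_{\a-1}/i_{\a} = n/2$ for both graphs, even the first-order asymptotics agree, $\a_G(\lambda) = \frac12 - \frac{1}{2\lambda} + O(\lambda^{-2})$ for both, and the two occupancy fractions differ only at order $\lambda^{-2}$. Consequently your proposed argument via the leading coefficient of the cleared-denominator polynomial $Q$ collapses: after clearing denominators the top-degree terms (degree $33$ down to $24$) cancel identically, and eventual positivity cannot be read off any independence-ratio comparison.

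What the paper does instead is purely computational but sidesteps this entirely: it computes $\vec{i}(G_{38})$ and $\vec{i}(H_{3,8})$ exactly, forms the difference of the two occupancy fractions, observes that its sign is governed by a polynomial of degree $23$ whose coefficient sequence has exactly one sign change, and invokes Descartes' rule of signs to conclude there is a unique positive root; since that root is below $17$, the inequality holds for all $\lambda > 17$. Your fallback tools (Sturm sequences, shifting by $17$ and checking nonnegative coefficients, exact arithmetic) would indeed suffice to certify the final positivity step, so the computational tail of your plan is salvageable --- but the graph-selection principle driving your search is vacuous, and without the correct finer criterion (comparison of lower-order coefficients of the $\vec{i}$-vectors, not of independence ratios) you would not locate a valid $G$.
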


\begin{proof}
    Let $G=G_{38}$ be the graph in~\url{https://houseofgraphs.org/graphs/49521} ~\cite{HOG}.
    We can compute $\a_G(\lambda)$ and $\a_{H_{3,8}}(\lambda)$ explicitly from $\vec i(G)=(1,38,646,6498,43111,199120,658882,1583954,2777315$ $,3537622,3238356,2097330,947518,300924,72142,14802,2660,380,38,2)$ \\and $\vec i( H_{3,8})=(1, 30, 390, 2890, 13515, 41736, 86610, 120690, 111225, 66090, 24948, 6420, 1370, 240, 30, 2).$
    Now $\a_G(\lambda) - \a_{H_{3,8}}(\lambda)$ is a rational function, for which the sign depends on a polynomial of degree $23$.
    This polynomial has only one change of sign, and by Descartes' rule of signs, this implies that it has a single positive root.
    We conclude that $\a_G(\lambda) > \a_{H_{3,8}}(\lambda)$ for $\lambda$ larger than the positive root of this polynomial of degree $23.$
\end{proof}

Analogously, we get the following theorem.

\begin{thr}
    There exists a $4$-regular graph $G$ with girth at least $5$ such that $\a_G(\lambda) > \a_{H_{4,6}}(\lambda)$ for every $\lambda>37.$
\end{thr}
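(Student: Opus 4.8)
The plan is to mirror exactly the strategy used in the preceding theorem about cubic graphs of girth at least $7$. First I would exhibit an explicit $4$-regular graph $G$ of girth at least $5$, most conveniently by pointing to a specific entry in the House of Graphs database (analogous to the graph $G_{38}$ used above), together with its independence vector $\vec{i}(G) = (i_0(G), i_1(G), \ldots, i_k(G))$, which encodes the multiplicity-tuple of the relevant set. From $\vec{i}(G)$ and the known vector $\vec{i}(H_{4,6})$, both partition functions $P_G(\lambda)$ and $P_{H_{4,6}}(\lambda)$ follow immediately by definition, and hence so do the two occupancy fractions $\a_G(\lambda) = \frac{\lambda P'_G(\lambda)}{\abs{V(G)} P_G(\lambda)}$ and $\a_{H_{4,6}}(\lambda)$.

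Next I would form the difference $\a_G(\lambda) - \a_{H_{4,6}}(\lambda)$. Since each occupancy fraction is a rational function in $\lambda$ whose denominator $P_G(\lambda)$ (respectively $P_{H_{4,6}}(\lambda)$) is strictly positive for all $\lambda > 0$, the sign of the difference is governed entirely by the sign of its numerator, which is a single polynomial $Q(\lambda)$ obtained by clearing denominators. The concrete task is to compute $Q(\lambda)$ explicitly and record its degree and the sign pattern of its coefficients.

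The decisive step is then a Descartes' rule of signs argument. If $Q(\lambda)$ exhibits exactly one change of sign in its coefficient sequence, then $Q$ has exactly one positive real root, say $\lambda_0$, and the sign of $Q$ (and therefore of $\a_G - \a_{H_{4,6}}$) is constant on $(\lambda_0, \infty)$. Checking that this constant sign is positive — equivalently verifying the strict inequality at one convenient large value such as a point just above $37$, or by inspecting the leading coefficient — pins down that $\a_G(\lambda) > \a_{H_{4,6}}(\lambda)$ for all $\lambda > \lambda_0$. It then remains only to confirm the numerical bound $\lambda_0 \le 37$, so that the claimed range $\lambda > 37$ is valid.

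The main obstacle is twofold and essentially computational rather than conceptual. First, one must actually locate a suitable $4$-regular girth-$5$ graph $G$ whose independence vector produces a crossover; this is where the computer search described in the appendix does the real work, since there is no reason a priori that such a $G$ must exist, and the conjecture being disproved asserts the opposite. Second, one must verify that the resulting high-degree polynomial $Q(\lambda)$ has \emph{exactly one} sign change in its coefficients — this is the crucial hypothesis that makes Descartes' rule yield a \emph{unique} positive root and hence a clean threshold $\lambda_0$. If the coefficient sequence had more sign changes, Descartes' rule would only bound the number of positive roots from above and the argument for an eventual-dominance threshold would require a more delicate analysis (for instance, isolating roots numerically and bounding $\lambda_0$ directly). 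The whole argument is therefore robust precisely to the extent that the chosen $G$ yields a single-sign-change numerator, which is what I would verify by exact rational arithmetic.
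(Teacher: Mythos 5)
Your proposal matches the paper's proof: the paper likewise takes an explicit $4$-regular girth-$5$ graph from the House of Graphs database (a graph $G_{32}$ on $32$ vertices), computes both occupancy fractions from the independence vectors, and concludes that $\a_G(\lambda) > \a_{H_{4,6}}(\lambda)$ for $\lambda$ beyond the positive root of the resulting polynomial (of degree $17$), exactly as in the cubic girth-$7$ case with its single-sign-change Descartes argument. The only caveat is that your write-up is a plan whose decisive facts (the specific graph, its independence vector, and the one-sign-change coefficient pattern) still need to be filled in by the computation, but that is equally true of the paper's own proof.
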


\begin{proof}
    Let $G=G_{32}$ be the graph in~\url{https://houseofgraphs.org/graphs/49999}.
We can compute $\a_G(\lambda)$ and $\a_{H_{4,6}}(\lambda)$ explicitly, and conclude that $\a_G(\lambda) > \a_{H_{4,6}}(\lambda)$ for $\lambda$ larger than the positive root of a polynomial of degree $17.$
\end{proof}

\section{On minimum occupancy fraction}\label{sec:minocfr}

In this section, we disprove~\cite[conj.~2 \& 5]{PP18} and~\cite[conj.]{CR18}.
For the study of the minimum occupancy fraction, we first prove the following proposition that compares graphs of the same order.

\begin{prop}\label{prop:maj->min}
    Let $G$ and $H$ be two graphs of order $n$.
    Suppose that $\a(G) \le \a(H)$ and
$$\frac{i_k(G)}{i_{k-1}(G)} \le \frac{i_k(H)}{i_{k-1}(H)} $$
    for every $1 \le k \le \a(G).$
    Then $\a_G(\lambda) \le \a_H(\lambda)$ for every $\lambda \ge 0.$
\end{prop}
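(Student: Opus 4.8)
The plan is to reduce the claim about graphs directly to the set-version criterion already established in \cref{prop:maj->min_setversion}. The occupancy fraction $\a_G(\lambda)$ is, up to the factor $1/\abs{V(G)}=1/n$, just $\E_S(\lambda)$ for the set $S$ with multiplicity-tuple $\vec i(G)$, and similarly for $H$. Since both graphs have the same order $n$, comparing $\a_G(\lambda)\le\a_H(\lambda)$ is equivalent to comparing $\E_A(\lambda)\le\E_B(\lambda)$, where $A$ and $B$ are the sets with multiplicity-tuples $\vec i(G)=(i_0(G),\ldots,i_{\a(G)}(G))$ and $\vec i(H)=(i_0(H),\ldots,i_{\a(H)}(H))$. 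Thus the entire content of the proposition is that the ratio hypothesis on consecutive $i_k$ values translates into the hypothesis of \cref{prop:maj->min_setversion}.

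First I would set $A$ and $B$ to be these two sets and note that their entries are positive integers (each $i_k$ is at least $1$ up to $k=\a(G)$, and indeed positive for all indices in range since every graph has independent sets of every size up to its independence number). Then the given condition $\frac{i_k(G)}{i_{k-1}(G)}\le\frac{i_k(H)}{i_{k-1}(H)}$ for $1\le k\le\a(G)$ is exactly $\frac{a_k}{a_{k-1}}\le\frac{b_k}{b_{k-1}}$ for $1\le k\le\a(G)$, so \cref{prop:maj->min_setversion} applies on the common range of indices up to $k=\a(G)$ and yields $\E_A(\lambda)\le\E_B(\lambda)$, hence $\a_G(\lambda)\le\a_H(\lambda)$ after dividing by $n$.

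The main obstacle I anticipate is the mismatch in the lengths of the two tuples: the hypothesis only controls the ratios up to $k=\a(G)$, but $H$ may have a strictly larger independence number $\a(H)\ge\a(G)$, so $B$ has extra high-value elements $i_{\a(G)+1}(H),\ldots,i_{\a(H)}(H)$ that are not constrained by any ratio inequality. I would handle this by splitting $B$ as $B=B'\cup B''$, where $B'$ collects the elements with value at most $\a(G)$ (multiplicity-tuple $(i_0(H),\ldots,i_{\a(G)}(H))$) and $B''$ collects those with value exceeding $\a(G)$. On $B'$ the ratio hypothesis against $A$ holds for all indices $1\le k\le\a(G)$, so \cref{prop:maj->min_setversion} gives $\E_A(\lambda)\le\E_{B'}(\lambda)$. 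Since every element of $B''$ exceeds $\max B'=\a(G)$, \cref{prop:occupancy_subsets} applied to the partition $B=B'\cup B''$ gives $\E_{B'}(\lambda)\le\E_B(\lambda)$. Chaining these, $\E_A(\lambda)\le\E_{B'}(\lambda)\le\E_B(\lambda)$, which completes the argument.

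Finally I would record that all inequalities hold for every $\lambda\ge 0$ (the degenerate case $\lambda=0$ being immediate since then $\E_S(0)=0$ for any set whose smallest element is $0$, which both $A$ and $B$ satisfy as $i_0=1$), so the chain is valid on the full claimed range and the conclusion $\a_G(\lambda)\le\a_H(\lambda)$ follows.
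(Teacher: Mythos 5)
Your proof is correct and follows essentially the same route as the paper: both apply \cref{prop:maj->min_setversion} to the tuple of $H$ truncated at index $\a(G)$ (your $B'$, the paper's $S_1$) and then use \cref{prop:occupancy_subsets} on the partition of $\vec i(H)$ into low and high values to absorb the unconstrained elements beyond $\a(G)$, chaining $\E_A(\lambda)\le\E_{B'}(\lambda)\le\E_B(\lambda)$. The only cosmetic difference is that the paper treats $\a(G)=\a(H)$ as a separate (immediate) case, whereas you handle it uniformly by allowing $B''$ to be empty.
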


\begin{proof}
    If $\a(G)=\a(H),$ this is immediate from~\cref{prop:maj->min_setversion}. 
    In the case that $\a(G)<\a(H)=\ell,$ we note that $(i_0(H), i_1(H), \ldots, i_{\ell}(H)) = (i_0(H), i_1(H), \ldots, i_{k}(H),0,\ldots, 0)+(0,0,\ldots,0,i_{k+1}(H), \ldots, i_{\ell}(H)).$
    Let the corresponding sets with these three multiplicity-tuples be $S,S_1,S_2.$
    Then~\cref{prop:maj->min_setversion} and~\cref{prop:occupancy_subsets} imply that $\a_G(\lambda) \le \frac 1n \E_{S_1}(\lambda) \le \frac 1n \E_{S}(\lambda)=\a_H(\lambda),$ as desired.
    \end{proof}

Using~\cref{prop:maj->min}, we can determine the extremal graphs for the occupancy fraction among cubic and $4$-regular triangle-free graphs of small order for every $\lambda >0.$

\begin{thr}\label{thr:PPconj2} Let $\lambda_1 \sim 1.21338$ be the positive root of $65x^6 + 110x^5 - 21x^4 - 144x^3 - 105x^2 - 30x - 3.$
    Let $\lambda_2 \sim 6.87002$ be the positive root of $-72x^8 + 180x^7 + 1639x^6 + 3158x^5 + 2777x^4 + 1276x^3 + 307x^2 + 34x + 1.$
    Then for every cubic triangle-free graphs with order bounded by $24$
    $$\a_G(\lambda) \ge \begin{cases}
        \a_{P_{5,2}}(\lambda) \mbox{ if }  \lambda \in (0, \lambda_1],\\
        \a_{DOD}(\lambda) \mbox{ if }  \lambda \in [ \lambda_1,\lambda_2],\\
        \a_{G_{14}}(\lambda) \mbox{ if } \lambda \in [ \lambda_2, +\infty).\\
    \end{cases}$$
\end{thr}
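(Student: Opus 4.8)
The plan is to turn a statement about an (a priori large) family of graphs into a bounded list of candidate minimizers followed by finitely many analytic comparisons. First I would generate, by computer, all cubic triangle-free graphs of each even order $n\in\{6,8,\ldots,24\}$ and compute the independent-set vector $\vec{i}(G)=(i_0(G),\ldots,i_{\a(G)}(G))$ for each. Within a fixed order $n$, \cref{prop:maj->min} is the right tool: whenever $\a(G)\le\a(H)$ and $i_k(G)/i_{k-1}(G)\le i_k(H)/i_{k-1}(H)$ for all $k$, it certifies $\a_G(\lambda)\le\a_H(\lambda)$ for every $\lambda\ge0$. So for each order I would isolate the \emph{critical} graphs, those not ratio-dominated by any other graph of the same order, and record that the order-$n$ minimum of $\a_G(\lambda)$ is attained, pointwise in $\lambda$, among these critical graphs. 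This collapses the whole family to a short explicit candidate list that contains $P_{5,2}$ (order $10$), $G_{14}$ (order $14$) and $DOD$ (order $20$).

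The second phase computes the lower envelope of the occupancy fractions of the candidates on $(0,\infty)$. The crucial point is that \cref{prop:maj->min} cannot be used here, since it requires equal order, whereas the normalization by $1/|V(G)|$ makes cross-order comparison genuinely different; indeed as $\lambda\to\infty$ one has $\a_G(\lambda)\to\a(G)/n$, and $G_{14}$ wins this limit with ratio $5/14$, strictly below the common value $2/5$ shared by $P_{5,2}$ and $DOD$. For each pair of candidates I would write $\a_G(\lambda)-\a_H(\lambda)=N(\lambda)/D(\lambda)$ with $D(\lambda)>0$ on $(0,\infty)$, so the sign is governed by the numerator polynomial $N$; exactly as in the proofs of the earlier theorems, I would locate the positive roots of $N$ using Descartes' rule of signs to bound the sign changes. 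After cancellation and removing positive factors, the comparison of $P_{5,2}$ with $DOD$ should reduce to the stated degree-$6$ polynomial, whose single sign change forces a unique positive root $\lambda_1$, and that of $DOD$ with $G_{14}$ to the stated degree-$8$ polynomial, again with a single sign change and unique positive root $\lambda_2$. Checking the remaining pair $P_{5,2}$ versus $G_{14}$, and verifying that every other candidate stays above these three, would then pin down the envelope as $P_{5,2}$ on $(0,\lambda_1]$, $DOD$ on $[\lambda_1,\lambda_2]$ and $G_{14}$ on $[\lambda_2,\infty)$.

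The main obstacle is making these computations rigorous, and it has two faces. On the within-order side, \cref{prop:maj->min} is only a sufficient condition, so an order may contain several mutually incomparable critical graphs; each must be carried into the candidate list, and I would have to confirm that the list stays small enough for the cross-order phase to remain tractable. On the analytic side the real difficulty is certifying the sign of each high-degree numerator on all of $(0,\infty)$ without floating-point error, which matters precisely because near $\lambda=0$ all cubic triangle-free graphs agree to low order and only separate at higher order, so the comparison is delicate exactly where it changes hands. I would therefore work with exact integer coefficients and certified root isolation (Descartes or Sturm sequences) to guarantee that each relevant numerator has exactly one positive root and no hidden additional crossings, so that the three-piece envelope is provably correct and the breakpoints are exactly the stated algebraic numbers $\lambda_1$ and $\lambda_2$.
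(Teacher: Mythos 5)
Your proposal follows essentially the same route as the paper's own proof: within each order, use \cref{prop:maj->min} to discard dominated graphs and keep only the critical ones, then compare the occupancy fractions of the few surviving candidates (including $P_{5,2}$, $G_{14}$ and $DOD$) across orders by analyzing the sign of the numerator of $\a_G(\lambda)-\a_H(\lambda)$, yielding the breakpoints $\lambda_1$ and $\lambda_2$. Your added detail on exact arithmetic and certified root isolation for the final pairwise comparisons is exactly the kind of verification the paper delegates to its computer code.
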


\begin{proof}
    For a fixed order, we call a graph $H$ critical if there is no graph $G$ such that~\cref{prop:maj->min} is true for these graphs.
    For every order, we compute the few critical graphs. At the end, we compare all of them and conclude.
\end{proof}

We note that with the same examples, we can disprove the conjecture in the end of~\cite{CR18}.

\begin{thr}
    Let $b_1 \sim 2.0927$ be the positive root of $x^3 - x^2 - 2x - 3/5=0$ and $b_2 \sim 17.264$ the positive root of a degree $51$ polynomial.
    The graph $DOD$ satisfies $P_{DOD}(\lambda)^{1/20}<P_{G_{14}}(\lambda)^{1/14}, P_{P_{5,2}}(\lambda)^{1/10}$ for $\lambda \in (b_1,b_2).$ 
\end{thr}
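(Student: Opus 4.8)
The plan is to clear the fractional exponents and reduce each of the two inequalities to the positivity of an explicit polynomial on the relevant interval. First I would record the three independence polynomials: from $\vec i(G_{14})=(1,14,70,154,147,48)$ one gets $P_{G_{14}}(\lambda)=1+14\lambda+70\lambda^2+154\lambda^3+147\lambda^4+48\lambda^5$, the Petersen graph gives $P_{P_{5,2}}(\lambda)=1+10\lambda+30\lambda^2+30\lambda^3+5\lambda^4$, and a direct computation of $\vec i(DOD)$ yields $P_{DOD}(\lambda)=1+20\lambda+160\lambda^2+660\lambda^3+1510\lambda^4+1912\lambda^5+1240\lambda^6+320\lambda^7+5\lambda^8$. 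Since all three polynomials are positive for $\lambda>0$, the inequality $P_{DOD}(\lambda)^{1/20}<P_{P_{5,2}}(\lambda)^{1/10}$ is equivalent (raise to the $20$th power) to $P_{DOD}(\lambda)<P_{P_{5,2}}(\lambda)^2$, while $P_{DOD}(\lambda)^{1/20}<P_{G_{14}}(\lambda)^{1/14}$ is equivalent (raise to the power $\mathrm{lcm}(20,14)=140$) to $P_{DOD}(\lambda)^7<P_{G_{14}}(\lambda)^{10}$.

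For the Petersen comparison I would simply expand the difference. The low-order coefficients match (both graphs are cubic of girth $5$, so the two sides agree through order $\lambda^4$), and one finds
$$P_{P_{5,2}}(\lambda)^2-P_{DOD}(\lambda)=20\lambda^5\left(\lambda^3-\lambda^2-2\lambda-\tfrac35\right).$$
The cubic factor has sign pattern $+,-,-,-$, so by Descartes' rule it has a unique positive root, which is exactly $b_1$; it is negative on $(0,b_1)$ and positive on $(b_1,\infty)$. Hence $P_{DOD}<P_{P_{5,2}}^2$ holds precisely for $\lambda>b_1$, which establishes the first inequality on all of $(b_1,\infty)$ and pins down the lower endpoint.

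For the $G_{14}$ comparison I would set $D_2(\lambda)=P_{G_{14}}(\lambda)^{10}-P_{DOD}(\lambda)^7$. Because $DOD$ and $G_{14}$ are both cubic of girth $5$, the normalized logarithms $\frac1{20}\log P_{DOD}$ and $\frac1{14}\log P_{G_{14}}$ have identical Taylor coefficients up to order $\lambda^4$ (the first discrepancy, at order $\lambda^5$, is governed by the differing numbers of $5$-cycles). Consequently $P_{DOD}^7$ and $P_{G_{14}}^{10}$ agree through order $\lambda^4$, so $\lambda^5\mid D_2$ and $D_2(\lambda)=\lambda^5 Q(\lambda)$ with $\deg Q=56-5=51$. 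A short cumulant computation gives $Q(0)=10\,[\lambda^5]\log P_{G_{14}}-7\,[\lambda^5]\log P_{DOD}=4>0$, while the leading coefficient of $Q$ is $-5^{7}<0$ (coming from $-P_{DOD}^7$). Thus $Q$ is positive near $0$ and tends to $-\infty$, and taking $b_2$ to be its smallest positive root gives the second inequality on $(0,b_2)$.

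The substantive step is to certify rigorously that $Q$ has no positive root below $b_2\sim 17.264$ and changes sign there, i.e.\ that $Q>0$ on $(0,b_2)$. This is a sign analysis of a single degree-$51$ polynomial with rational coefficients, which I would carry out by exact root isolation (a Sturm sequence, or interval arithmetic combined with evaluation at certified sample points) rather than by Descartes' rule, since a high-degree polynomial need not have a single sign change; this is the main obstacle. Combining the two parts, on $(b_1,b_2)$ one has simultaneously $P_{DOD}<P_{P_{5,2}}^2$ and $P_{DOD}^7<P_{G_{14}}^{10}$, which is exactly the claim. As a consistency check, note that $\frac1{20}\log P_{DOD}-\frac1{14}\log P_{G_{14}}=\int_0^\lambda\frac{\a_{DOD}(t)-\a_{G_{14}}(t)}{t}\,dt$, and by \cref{thr:PPconj2} the integrand is nonpositive on $[\lambda_1,\lambda_2]$ and nonnegative afterwards; this valley shape forces the sign change of $D_2$ to occur at some $b_2>\lambda_2$, consistent with $b_2\sim 17.264$.
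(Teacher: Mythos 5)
Your proposal is correct and follows essentially the same route as the paper: compute the three partition functions from the stated $\vec{i}$ vectors, clear the fractional exponents, and reduce both inequalities to sign analyses of explicit polynomials (your cubic factor $20\lambda^5(\lambda^3-\lambda^2-2\lambda-\tfrac35)$ and degree-$51$ polynomial $Q$ are exactly the polynomials defining $b_1$ and $b_2$ in the statement), with the paper likewise delegating the final sign certification of the high-degree comparison to a computer verification. Your write-up is in fact more explicit than the paper's proof, which simply says the conclusion "is derived from comparing polynomials" and points to the repository.
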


\begin{proof}
    As the polynomials have been computed from $\vec{i}(P_{5,2})=(1, 10, 30, 30, 5)$ , $\vec{i}(G_{14})=(1, 14, 70, 154, 147, 48)$ and $\vec{i}(DOD)=(1,20,160,660,1510,1912,1240,320,5)$,
    the conclusion is just derived from comparing polynomials.
    These comparisons have been done in\\ \url{https://github.com/JorikJooken/occupancyFraction/blob/main/CR_conj.pdf}.
\end{proof}

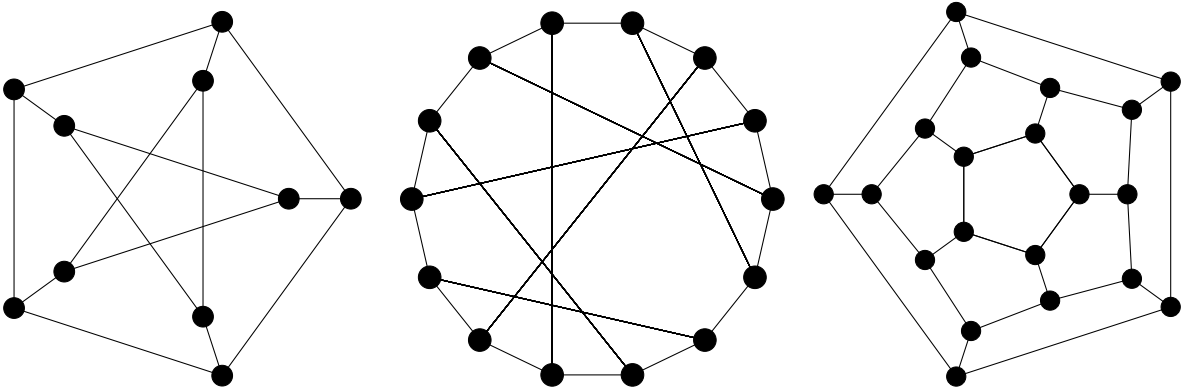
\begin{figure}[h]
\begin{center}
\begin{tikzpicture}[scale=0.55]
\foreach \x in {0,1,...,4}{
\draw[fill] (\x*360/5:4.5) circle (0.25);
\draw[fill] (\x*360/5:3) circle (0.25);
\draw (\x*360/5:3)--(\x*360/5:4.5);
\draw (\x*360/5+360/5:4.5)--(\x*360/5:4.5);
\draw (\x*360/5+720/5:3)--(\x*360/5:3);
}
\end{tikzpicture} \quad
\begin{tikzpicture}[scale=0.8]
\foreach \x in {0,1,...,13}{
\draw[fill] (\x*360/14:3) circle (0.1875);
\draw (\x*360/14+360/14:3)--(\x*360/14:3);
\draw (0:3)--(5*360/14:3);
\draw (-360/14:3)--(3*360/14:3);
\draw (360/14:3)--(7*360/14:3);
\draw (-2*360/14:3)--(8*360/14:3);
\draw (-4*360/14:3)--(4*360/14:3);
\draw (2*360/14:3)--(9*360/14:3);
\draw (-3*360/14:3)--(6*360/14:3);
}
\end{tikzpicture}\quad
\begin{tikzpicture}[scale=0.425]
\foreach \x in {0,1,...,4}
{
\draw[fill] (\x*360/5:2) circle (0.3);
\draw (\x*360/5+360/5:2)--(\x*360/5:2);
\draw[fill] (\x*360/5:3.5) circle (0.3);
\draw (\x*360/5:3.5)--(\x*360/5:2);
\draw (\x*360/5+360/5:2)--(\x*360/5:2);
\draw[fill] (\x*360/5+36:4.5) circle (0.3);
\draw (\x*360/5:3.5)--(\x*360/5+36:4.5);
\draw (\x*360/5:3.5)--(\x*360/5-36:4.5);
\draw[fill] (\x*360/5+36:6) circle (0.3);
\draw (\x*360/5+36:6)--(\x*360/5+36:4.5);
\draw (\x*360/5+36:6)--(\x*360/5+108:6);
}
\end{tikzpicture}
\end{center}
\caption{$P_{5,2}, G_{14}, DOD$}\label{fig:cubic_trianglefree} 
\end{figure}

\begin{thr}\label{thr:ag_CYC_ROB}
    Let $\lambda_3 \sim 1.77239$ be the positive root of $90x^7 + 819x^6 + 541x^5 - 1820x^4 - 2879x^3 - 1610x^2 - 401x - 38=0.$ Then for every $4$-regular triangle-free graph with order bounded by $19,$
    $$\a_G(\lambda) \ge \begin{cases}
        \a_{G_{\text{ROB}}}(\lambda) \mbox{ if }  \lambda \in (0, \lambda_3],\\
        \a_{\text{CYC}_{13}}(\lambda) \mbox{ if } \lambda \in [ \lambda_3, +\infty).\\
    \end{cases}$$
\end{thr}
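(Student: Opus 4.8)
The plan is to mimic the proof strategy of \cref{thr:PPconj2}, which determines the minimum-occupancy graph among all cubic triangle-free graphs of order at most $24$. The core idea is that \cref{prop:maj->min} induces a partial order on graphs of a fixed order $n$: we say $G \preceq H$ when $\a(G) \le \a(H)$ and $\frac{i_k(G)}{i_{k-1}(G)} \le \frac{i_k(H)}{i_{k-1}(H)}$ for all $1 \le k \le \a(G)$, and in that case $\a_G(\lambda) \le \a_H(\lambda)$ for every $\lambda \ge 0$. The minimizers of the occupancy fraction must therefore be found among the \emph{critical} graphs --- those minimal elements $H$ for which no $G$ satisfies $G \preceq H$ strictly. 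So the first step is, for each $n \in \{5, 6, \ldots, 19\}$ (the relevant orders are multiples permitting $4$-regular triangle-free graphs, i.e.\ $n \ge 5$), to enumerate all $4$-regular triangle-free graphs, compute the independence-set vector $\vec i(G)$ for each, and extract the finitely many critical graphs with respect to $\preceq$.

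Second, I would assemble the (small) list of surviving critical graphs across all orders up to $19$. For each pair of survivors the comparison $\a_G(\lambda)$ versus $\a_H(\lambda)$ reduces to the sign of a single polynomial in $\lambda$ (the numerator of the difference of the two rational functions, after clearing denominators), exactly as in the proofs in \cref{sec:maxocfr}. The claim to be established is that precisely two graphs --- the Robertson-type graph $G_{\text{ROB}}$ and the cyclic graph $\text{CYC}_{13}$ --- remain as pointwise minimizers, with the crossover occurring at $\lambda_3$. Concretely, I would verify that $\a_{G_{\text{ROB}}}(\lambda) \le \a_{\text{CYC}_{13}}(\lambda)$ on $(0,\lambda_3]$ and the reverse on $[\lambda_3,+\infty)$, and that every other critical graph $H$ satisfies $\a_H(\lambda) \ge \min\{\a_{G_{\text{ROB}}}(\lambda), \a_{\text{CYC}_{13}}(\lambda)\}$ for all $\lambda > 0$. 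The threshold $\lambda_3 \sim 1.77239$ should emerge as the unique positive root of the degree-$7$ polynomial $90x^7 + 819x^6 + 541x^5 - 1820x^4 - 2879x^3 - 1610x^2 - 401x - 38$, which is the numerator polynomial governing the sign of $\a_{G_{\text{ROB}}}(\lambda) - \a_{\text{CYC}_{13}}(\lambda)$; uniqueness of the positive root follows from Descartes' rule of signs, since this polynomial has exactly one sign change in its coefficient sequence.

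The main obstacle is computational rather than conceptual: the exhaustive generation of all $4$-regular triangle-free graphs up to order $19$ (using a tool such as \texttt{genreg} or \texttt{nauty}, as detailed in \cref{sec:app1}) and the reliable computation of each $\vec i(G)$. Once the critical graphs are in hand, the analytic comparisons are routine polynomial sign analyses. The delicate part of the write-up is the claim that the $\preceq$-minimal graphs suffice to determine the pointwise minimum of $\a_G(\lambda)$: \cref{prop:maj->min} only gives a one-directional implication, so a graph $H$ that is \emph{not} below $G_{\text{ROB}}$ or $\text{CYC}_{13}$ in the partial order might still conceivably have a smaller occupancy fraction at some $\lambda$. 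This is why the final step cannot be purely order-theoretic and must explicitly compare $\a_H(\lambda)$ for \emph{each} critical $H$ against the two conjectured minimizers over all $\lambda$; the partial order merely prunes the candidate set down to a manageable size, after which a finite collection of explicit polynomial-sign verifications closes the argument. I would record all these verifications, as in the companion theorems, in the accompanying repository.
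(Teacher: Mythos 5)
Your proposal matches the paper's proof essentially verbatim: the paper likewise defines critical graphs via \cref{prop:maj->min}, computes them for every order $8 \le n \le 19$ admitting triangle-free $4$-regular graphs, and then explicitly compares the occupancy fractions of all surviving critical graphs (with the polynomial-sign details recorded in the accompanying repository), which is exactly your pruning-then-explicit-comparison scheme, including your correct observation that the partial order alone cannot conclude. The only minor slip is that triangle-free $4$-regular graphs require $n \ge 8$ rather than $n \ge 5$ (a vertex, its independent neighborhood, and three further neighbors of any neighbor already force eight vertices), but this is harmless since the enumeration below order $8$ is empty.
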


\begin{proof}
    Similarly as in~\cref{thr:PPconj2}, we call a graph $H$ critical if there is no graph $G$ of the same order such that~\cref{prop:maj->min} is true for these graphs.
    For every order $8 \le n \le 19$ for which triangle-free $4$-regular graphs exist, we compute the critical graphs. We compute $\a_H(\lambda)$ for each of these critical graphs and by comparing all of them, we conclude.
    Details of the final comparisons can be found in~\url{https://github.com/JorikJooken/occupancyFraction/blob/main/PPconj5.pdf}.
\end{proof}

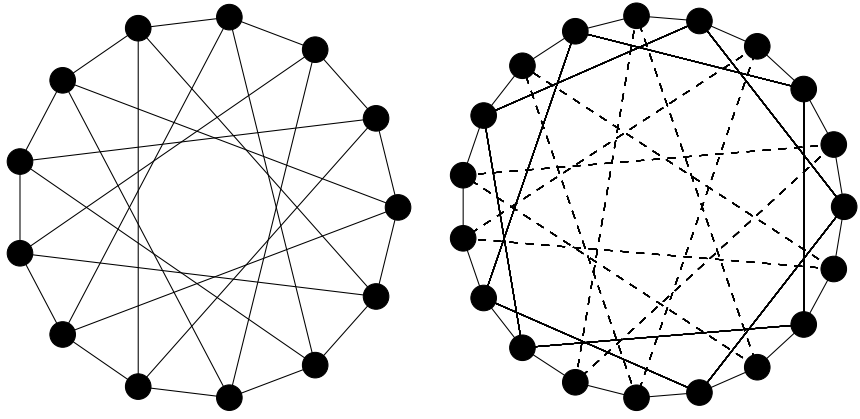
\begin{figure}[h!]
\begin{center}
\begin{tikzpicture}[scale=0.85]
\foreach \x in {0,1,...,12}{
\draw[fill] (\x*360/13:3) circle (0.2);
\draw (\x*360/13+360/13:3)--(\x*360/13:3);
\draw (\x*360/13+1800/13:3)--(\x*360/13:3);
}
\end{tikzpicture} \quad
\begin{tikzpicture}[scale=0.85]
\foreach \x in {0,1,...,18}{
\draw[fill] (\x*360/19:3) circle (0.2);
\draw (\x*360/19+360/19:3)--(\x*360/19:3);
\foreach \x in {-8,-4,0,4,8,2,17}{
\draw (\x*360/19+4*360/19:3)--(\x*360/19:3);
}
\foreach \x in {6,12}{
\draw (\x*360/19+5*360/19:3)--(\x*360/19:3);
}
\draw[dashed] (-360/19:3)--(10*360/19:3)--(3*360/19:3)--(14*360/19:3)--(7*360/19:3)--cycle;
\draw[dashed] (360/19:3)--(9*360/19:3)--(-3*360/19:3)--(5*360/19:3)--(13*360/19:3)--cycle;
}
\end{tikzpicture}
\end{center}
\caption{Cyclotomic-13 graph $\text{CYC}_{13}$ and
Robertson Graph $G_{\text{ROB}}$}\label{fig:Cyc13_Rob} 
\end{figure}

By focusing on a subset of the triangle-free $4$-regular graphs, we can find counterexamples to~\cite[conj.~5]{PP18}.
Let $G_{20}$ and $G_{22}$ be the graphs with graph$6$ presentation given by respectively\\
\texttt{S@?IC?g@S\_P?@aOWOS@ACSD@GGPCg?gB?} and \texttt{UIAC@OOA\_H@@?Qo?c\_?cH@O?OQD?GIC?OG\_`?KQ?}
; the two graphs are also presented in~\cref{fig:G20_G22_conj5}.

Let $\lambda_4 \sim 0.434965$ be the smallest positive root of
$8x^{10} - 724x^9 - 7040x^8 - 7612x^7 + 11321x^6 + 23576x^5 + 11895x^4 - 593x^3 - 2265x^2 - 679x =65$, $\lambda_5 \sim 1.23423$ be the positive root of $-24x^{11} - 952x^{10} - 11612x^9 - 25296x^8 - 9658x^7 + 25646x^6 + 36870x^5 + 22158x^4 + 7266x^3 + 1356x^2 + 137x + 6=0$ and
$\lambda_6 \sim 2.27938$ be the positive root of
$132x^7 + 252x^6 - 318x^5 - 1340x^4 - 1477x^3 - 747x^2 - 181x = 17.$

\begin{thr}
    For $\lambda \in (\lambda_4, \lambda_5),$ $\a_{G_{22}}(\lambda) <
        \min \{ \a_{G_{\text{ROB}}}(\lambda) ,
        \a_{\text{CYC}_{13}}(\lambda) \}.$
        
        For $\lambda \in (\lambda_5, \lambda_6), \a_{G_{20}}(\lambda) <
        \min \{ \a_{G_{\text{ROB}}}(\lambda) ,
        \a_{\text{CYC}_{13}}(\lambda) \}.$
\end{thr}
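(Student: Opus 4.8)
The statement is a finite list of pairwise comparisons between occupancy fractions, so the plan is to reduce each comparison to the sign of one explicit univariate polynomial and then certify where that polynomial is negative. First I would record the independence sequences $\vec{i}(G)$ of the four graphs involved. For $\text{CYC}_{13}$ and $G_{\text{ROB}}$ these are already in hand, since they are exactly the critical graphs produced in \cref{thr:ag_CYC_ROB}; for $G_{20}$ and $G_{22}$ they are obtained directly from the given graph6 encodings by counting independent sets of each cardinality. From $\vec{i}(G)=(i_0(G),\dots,i_{\a(G)}(G))$ I form $P_G(\lambda)=\sum_k i_k(G)\lambda^k$ and $\a_G(\lambda)=\frac{\lambda P'_G(\lambda)}{n\,P_G(\lambda)}$ for each of the four graphs.

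The key reduction is that for graphs $G,H$ of orders $n_G,n_H$,
\[
\a_{G}(\lambda)-\a_{H}(\lambda)=\frac{\lambda\bigl(n_H\,P'_{G}(\lambda)\,P_{H}(\lambda)-n_G\,P'_{H}(\lambda)\,P_{G}(\lambda)\bigr)}{n_G\,n_H\,P_{G}(\lambda)\,P_{H}(\lambda)} .
\]
Because every coefficient $i_k$ is positive, both partition functions are strictly positive for $\lambda>0$, so on the positive axis the sign of $\a_G-\a_H$ equals the sign of the numerator polynomial $N_{G,H}(\lambda):=n_H P'_{G}P_{H}-n_G P'_{H}P_{G}$. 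I would compute $N_{G,H}$ for the four reference pairs $(G_{22},G_{\text{ROB}})$, $(G_{22},\text{CYC}_{13})$, $(G_{20},G_{\text{ROB}})$, $(G_{20},\text{CYC}_{13})$, together with the single pair $(G_{20},G_{22})$. The two claimed minima then unwind as follows: $\a_{G_{22}}<\min\{\a_{G_{\text{ROB}}},\a_{\text{CYC}_{13}}\}$ on $(\lambda_4,\lambda_5)$ is equivalent to $N_{G_{22},G_{\text{ROB}}}<0$ and $N_{G_{22},\text{CYC}_{13}}<0$ throughout that interval, and symmetrically for $G_{20}$ on $(\lambda_5,\lambda_6)$. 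Since $\lambda_5$ is the common endpoint of the two intervals, I expect it to be the crossover $N_{G_{20},G_{22}}=0$ where the two large graphs tie (consistent with the degree-$11$ defining polynomial arising from cancellation between two graphs of close independence ratio), while $\lambda_4$ and $\lambda_6$ are the crossing points at which $G_{22}$, respectively $G_{20}$, meets its binding reference graph.

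To finish, I would isolate the positive roots of each of these numerators. After removing the trivial factor $\lambda$ and any common positive factor, the surviving polynomials are precisely (a positive multiple of) the degree-$10$, $11$, and $7$ polynomials defining $\lambda_4,\lambda_5,\lambda_6$; verifying this identity and then evaluating one interior point of each interval pins down all signs. The crux, and the main obstacle, is certifying that \emph{no further} sign change occurs strictly inside either interval: unlike the $H_{3,8}$ comparison, here the identity of the binding reference graph and of the winning large graph both change across the two regions, so a single appeal to Descartes' rule of signs is insufficient. Instead I would isolate all positive roots of each full numerator rigorously—via a Sturm sequence or certified interval arithmetic on the factored forms—and check that the only roots landing in the stated ranges are the named $\lambda_i$. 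This step must be exact rather than floating-point, so I would delegate it to symbolic computation (Sturm's theorem and resultants), as in the verification files referenced for the companion theorems; once the root locations and one interior sign per comparison are established, the five sign statements assemble directly into the two asserted inequalities.
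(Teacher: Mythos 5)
Your proposal is correct and is essentially the paper's own proof: the paper simply states ``we compute and compare the $4$ rational functions $\a_{H}(\lambda)$ for $H \in \{\text{CYC}_{13}, G_{\text{ROB}}, G_{20}, G_{22}\}$,'' and your reduction of each pairwise comparison to the sign of the numerator $n_H P'_G P_H - n_G P'_H P_G$, followed by exact root isolation and an interior-point sign check, is precisely the rigorous content of that computation. Your hedged guess about which crossing defines $\lambda_5$ is immaterial, since your verification plan does not rely on it.
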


\begin{proof}
    We compute and compare the $4$ rational functions $ \a_{H}(\lambda)$ for $H \in \{\text{CYC}_{13}, G_{\text{ROB}}, G_{20}, G_{22}\}.$
\end{proof}

We finally note that neither $G_{\text{ROB}}$ nor $\text{CYC}_{13}$ minimizes $\frac{1}{\abs{V(G)}} \log \abs{I(G)}.$ Hereby $\text{CYC}_{13}$ cannot be extremal by integrating $\a_G(\lambda)$ over $(0,1]$ and~\cref{thr:ag_CYC_ROB}.
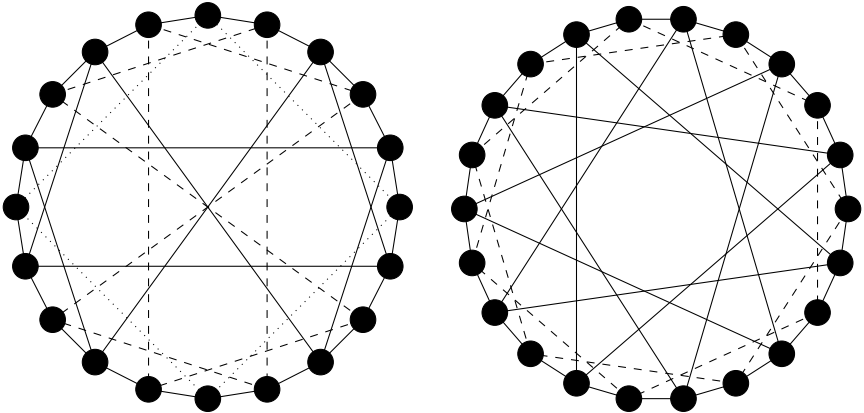
\begin{figure}[h!]
\begin{center}
\begin{tikzpicture}[scale=0.85]
\foreach \x in {0,1,...,19}{
\draw[fill] (\x*360/20:3) circle (0.2);
\draw (\x*360/20+360/20:3)--(\x*360/20:3);
}
\foreach \x in {0,5,10,15}{
\draw[dotted] (\x*360/20+5*360/20:3)--(\x*360/20:3);
}
\foreach \x in {-1,-3,7,9}{
\draw (\x*360/20+4*360/20:3)--(\x*360/20:3);
}
\foreach \x in {3,-3}{
\draw (\x*360/20+180:3)--(\x*360/20:3);
}
\foreach \x in {-1,1}{
\draw (\x*360/20:3)--(180-\x*360/20:3);
}
\foreach \x in {4,2,12,14}{
\draw[dashed] (\x*360/20+4*360/20:3)--(\x*360/20:3);
}
\foreach \x in {8,-8}{
\draw[dashed] (\x*360/20+180:3)--(\x*360/20:3);
}
\foreach \x in {4,6}{
\draw[dashed] (\x*360/20:3)--(-\x*360/20:3);
}
\end{tikzpicture}
\quad
\begin{tikzpicture}[scale=0.85]
\foreach \x in {0,1,...,21}{
\draw[fill] (\x*360/22:3) circle (0.2);
\draw (\x*360/22+360/22:3)--(\x*360/22:3);
}
\foreach \x in {0,1,2,...,10}{
\draw[dashed] (2*\x*360/22+4*360/22:3)--(2*\x*360/22:3);
\draw (2*\x*360/22+9*360/22:3)--(2*\x*360/22+360/22:3);
}
\end{tikzpicture}
 \end{center}
\caption{Graphs $G_{20}$ and $G_{22}$}\label{fig:G20_G22_conj5} 
  \end{figure}

\begin{thr}
    $$\frac{1}{22} \log \abs{I(G_{22})} < \frac{1}{19} \log \abs{I(G_{\text{ROB}})}$$
\end{thr}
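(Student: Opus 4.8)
The key reformulation is that $\abs{I(G)}$ is exactly the partition function evaluated at $\lambda=1$, namely $P_G(1)=\sum_k i_k(G)$, since every independent set (including $\emptyset$) contributes $\lambda^{\abs{\cdot}}=1$ there. Hence the asserted inequality $\frac{1}{22}\log\abs{I(G_{22})}<\frac{1}{19}\log\abs{I(G_{\text{ROB}})}$ is, after multiplying by $22\cdot 19$ and exponentiating, equivalent to the purely arithmetic statement
$$\abs{I(G_{22})}^{19}<\abs{I(G_{\text{ROB}})}^{22}.$$
Both sides are positive integers, so the plan is to reduce the transcendental inequality to this exact integer comparison, which can be certified without any floating-point error.

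Concretely, I would first recover the full tuples $\vec{i}(G_{22})$ and $\vec{i}(G_{\text{ROB}})$ from the graph$6$ presentation of $G_{22}$ and from the Robertson graph on $19$ vertices, using the independent-set enumeration described in~\cref{sec:app1}; both graphs are small enough that exact counting of independent sets by size is routine. Summing the entries of each tuple yields the two integers $\abs{I(G_{22})}=\sum_k i_k(G_{22})$ and $\abs{I(G_{\text{ROB}})}=\sum_k i_k(G_{\text{ROB}})$, and substituting these into the displayed inequality finishes the proof.

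It is worth recording why the structural route via the occupancy fraction does \emph{not} apply directly here. Integrating the identity $\frac{\a_G(\lambda)}{\lambda}=\frac{1}{\abs{V(G)}}\frac{d}{d\lambda}\log P_G(\lambda)$ over $(0,1]$ and using $P_G(0)=1$ gives $\frac{1}{\abs{V(G)}}\log\abs{I(G)}=\int_0^1\frac{\a_G(\lambda)}{\lambda}\,d\lambda$, so the claim is exactly $\int_0^1\frac{\a_{G_{22}}(\lambda)-\a_{G_{\text{ROB}}}(\lambda)}{\lambda}\,d\lambda<0$. One cannot deduce this from a pointwise bound, because the preceding theorem only guarantees $\a_{G_{22}}(\lambda)<\a_{G_{\text{ROB}}}(\lambda)$ on $(\lambda_4,\lambda_5)$ with $\lambda_4\sim 0.434965>0$, so on $(0,\lambda_4]$ the sign of the integrand is uncontrolled and integrating the pointwise inequality fails. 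This is precisely the contrast with $\text{CYC}_{13}$, where $\a_{G_{\text{ROB}}}\le\a_{\text{CYC}_{13}}$ holds throughout $(0,1]\subset(0,\lambda_3]$ by~\cref{thr:ag_CYC_ROB} and the integral comparison is immediate.

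The main obstacle is therefore entirely numerical rather than conceptual: the two sides of $\abs{I(G_{22})}^{19}<\abs{I(G_{\text{ROB}})}^{22}$ are enormous integers, and the margin must be verified in exact arithmetic to rule out that the inequality is an artifact of rounding. Since $G_{22}$ and $G_{\text{ROB}}$ are both $4$-regular triangle-free graphs with a comparable per-vertex growth rate of the independence polynomial, the quantities $\abs{I(G_{22})}^{1/22}$ and $\abs{I(G_{\text{ROB}})}^{1/19}$ are genuinely close, so confirming a strict gap is the delicate point; this is why I would keep every quantity in exact integer form until the very last comparison.
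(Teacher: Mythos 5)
Your proposal is correct and follows essentially the same route as the paper, which simply computes $\abs{I(G_{22})}=6447$ and $\abs{I(G_{\text{ROB}})}=1950$ by exact enumeration and concludes by direct calculation. Your extra precaution of comparing the integers $\abs{I(G_{22})}^{19}$ and $\abs{I(G_{\text{ROB}})}^{22}$ in exact arithmetic is a sensible refinement (the margin is indeed tiny: $6447^{19}<1950^{22}$ holds by a ratio of roughly $e^{0.005}$), but it is the same proof.
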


\begin{proof}
    This follows from a direct calculation, knowing that $\abs{I(G_{22})}=6447$ and $\abs{I(G_{\text{ROB}})}=1950.$ 
\end{proof}

\section{Conclusion}\label{sec:conc}


By investigating the conjectures on the extremal values of occupancy fractions for certain classes of regular graphs with a minimum girth condition, we concluded that multiple such conjectures were incorrect when considering all possible fugacities. As such, the number of conjectures and questions that are likely to be true are narrowed down.
These conjectures are related to the study on the number of graph homomorphisms between graphs.
A homomorphism is a map from $V(G)$ to $V(H)$ that maps every edge of $G$ to an edge of $H$. The number of graph homorphisms from $G$ to $H$ (or the number of $H$-colorings on $G$) is denoted by $\hom(G, H)$.
Two of the main quantities generalized with the concept of graph homomorphisms, are the number of independent sets in a graph $G$, $\abs{I(G)} = \hom(G, \tikzHind)$, and the number of $q$-colorings of $G$, which is equal to $\hom(G, K_q)$.

One of the conjectures inspiring and summarizing questions on the number of graph homomorphisms was the one by Galvin,~\cite[conj.~1]{Galvin13} (and its precedent by Galvin and Tetali~\cite{GT04}), stating that for graphs $G$ and $H$, where $G$ is $d$-regular and has order $n$, the following inequality is true
$$\hom(G,H) \le \max \{ \hom(K_{d,d},H)^{n/2d} , \hom(K_{d+1},H)^{n/(d+1)}\}$$
The conjecture of Galvin was disproven by Sernau~\cite{Sernau17}. In his counterexample, $H$ is disconnected. 
Nevertheless, also when imposing that $G$ and $H$ are both connected, counterexamples can be found (see~\cref{sec:app2} for examples where $G$ and $H$ are simple, connected graphs).

We now highlight three interesting directions for future research.

\begin{itemize}
        \item No counterexample to~\cite[Ques.~2]{PP18} has been discovered.
\begin{q}(\cite[Ques.~2]{PP18})\label{q:PP_q2}
Is it true that for all graphs $H$ and all cubic graphs $G$ of girth at least $6$,
\[ \hom(G,H)^{1/|V(G)|} \le \hom(H_{3,6},H)^{1/14} \, ?\]
\end{q}
As an analog to higher girths of a result of~\cite{SSSZ20} (first conjectured by~\cite{CCPT17}),~\cref{q:PP_q2} having a positive answer seems very probable at this point.

    \item The remaining conjecture by Perkins and Perarnau (~\cite[Conj.~1]{PP18}) also seems to be true. This conjecture says that Moore graphs are extremal (attaining minimum or maximum depending on parity of girth) for the normalized independence polynomial, equivalently for $\frac{\log P_G(\lambda)}{\abs{V(G)}}$.
Hereby one of the approaches is to prove extremality of $\a_G(\lambda)$ for $ \lambda \in (0,1]$ for these Moore graphs. In particular, one can aim to do so for the Moore graphs $H_{4,6}, H_{3,8}$ which would be restrictions of~\cite[conj.~3,4]{PP18}. The analogous restriction of~\cite[conj.~2]{PP18} is proven in~\cite[Thm.~3]{PP18}.

    \item It is known by \cite[Cor.~A.2]{BSVV08}, and by an alternative proof by Csikv\'{a}ri in~\cite[Thm.~8.3]{Zhao17},
that $K_{d+1}$ minimizes the (normalized) number of $q$-colorings ($\hom(G, K_q)^{1/\abs{V(G)}}$) among $d$-regular graphs $G$ for any positive integer $q$. In the other direction, $K_{d,d}$ is the maximizer by~\cite[Thm.~1.4]{SSSZ20}.
As an analog of~\cite[Thm.~9.4]{Zhao17} (actually~\cite[Thm.~3,5]{PP18} uses girth $5$ instead of $C_4$-free for the maximizer), it would be natural to conjecture the same for $q$-colorings and as such extend~\cite[Cor.~A.2]{BSVV08} for higher girth conditions.
Again special care is needed for the adjusted conjecture and it turns out\footnote{E.g. $\hom(DOD,K_3)=7200<14400=\hom(P_{5,2},K_3)^2$} that one needs to impose that $q \ge 4$ for $P_{5,2}$ to be the minimizer among triangle-free cubic graphs.

\begin{conj} \label{conj:ind-girth}
(a) Provided that $q\ge 4,$ among $3$-regular triangle-free graphs $G$, the quantity $\hom(G, K_q)^{1/\abs{V(G)}}$ is minimized when $G$ is the Petersen graph.

(b) Among $3$-regular graphs $G$ without cycles of length $4$, the quantity $\hom(G, K_q)^{1/\abs{V(G)}}$ is maximized when $G$ is the Heawood graph.
\end{conj}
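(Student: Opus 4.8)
The plan is to adapt the occupancy method of Davies et al.~\cite{DJPR17}, which the paper uses for independent sets via $\a_G(\lambda)$, to the proper $q$-coloring model. The right parametrized object is the antiferromagnetic Potts partition function $Z_G(q,w) = \sum_{\phi \colon V(G) \to [q]} w^{m(\phi)}$, where $m(\phi)$ is the number of monochromatic edges of $\phi$ and $w \in [0,1]$; it specializes to $\hom(G,K_q)$ at $w=0$ and to $q^{\abs{V(G)}}$ at $w=1$. Differentiating $\log Z_G$ in $w$ and integrating gives
\begin{equation}
\frac{1}{\abs{V(G)}}\log \hom(G,K_q) = \log q - \int_0^1 \frac{1}{w}\, u_G(q,w)\, dw,
\end{equation}
where $u_G(q,w) = \frac{1}{\abs{V(G)}}\E_w[m(\phi)]$ is the expected number of monochromatic edges per vertex under the Gibbs measure $\phi \mapsto w^{m(\phi)}/Z_G(q,w)$ (the integrand is bounded as $w \to 0^+$ since monochromatic edges are then $O(w)$-rare). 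This $u_G$ plays exactly the role the occupancy fraction does here: to show the Petersen graph minimizes $\hom(G,K_q)^{1/\abs{V(G)}}$ in part~(a) it suffices to establish $u_G(q,w) \le u_{\text{Pet}}(q,w)$ for all admissible $G$ and all $w \in (0,1)$, $q \ge 4$, whereas the Heawood maximizer in part~(b) requires the reverse inequality $u_G(q,w) \ge u_{\text{Hea}}(q,w)$.

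Next I would localize $u_G$. Fixing a vertex $v$ (or an edge) and conditioning on the coloring of everything outside its relevant neighborhood, the local contribution to $u_G$ depends only on the ``external fields'' — the induced color distributions — seen across that neighborhood, and this is where the girth hypotheses enter. Triangle-freeness (part~a) forces $N(v)$ to be independent, so the colors of the three neighbors of $v$ are conditionally independent given the outside and the local generating function factors into three single-vertex pieces. The condition of having no $4$-cycles (part~b) guarantees that any two vertices share at most one common neighbor, so the radius-two ball around an edge is a tree and the local computation again factorizes cleanly; these are precisely the structures realized exactly on the Petersen and Heawood graphs, whose $2$-balls are trees.

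With the local factorization in hand I would phrase the pointwise bound on $u_G(q,w)$ as a finite-dimensional optimization over the admissible marginal color-distributions at a vertex and its neighbors, subject to the consistency and girth constraints, in the spirit of the linear-programming relaxations of the occupancy method. Exploiting the symmetry of the color set, I expect to reduce each vertex marginal to a two-parameter family (one ``majority'' color against the rest), collapsing the $q$-dimensional program to a low-dimensional problem whose optimum can be tracked as a function of $q$ and $w$. The extremal profile should be the symmetric one realized on the vertex- and edge-transitive graphs Petersen and Heawood; since both are $(3,g)$-cages with every local $2$-ball a tree, the local optimum is attained simultaneously at every vertex, so the global inequality is tight exactly for these graphs.

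The main obstacle is proving the local bound rigorously for all $q \ge 4$ and all $w \in (0,1)$, rather than merely checking it numerically for small $q$. Two features make this harder than the independent-set case treated in the paper: the local program now ranges over distributions on $q$ colors instead of a single occupied/unoccupied variable, and the two parts demand opposite inequality directions, so the convexity delivering the bound in one direction must be shown to reverse in the other. Controlling the sign of the second-order terms uniformly in $q$ and $w$ — equivalently, ruling out any local profile beating the cage's — is the crux, and the threshold $q \ge 4$ must emerge from exactly this analysis, consistent with its failure at $q=3$ witnessed by $\hom(DOD,K_3) < \hom(P_{5,2},K_3)^2$. A plausible route past this is the symmetrization above, which I expect to reduce the verification to a single-variable inequality amenable to the Descartes- or Sturm-type sign analysis used elsewhere in the paper.
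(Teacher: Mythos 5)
You are attempting to prove Conjecture~\ref{conj:ind-girth}, but the paper does not prove this statement at all: it is posed in the conclusion as an open direction for future research, motivated by the failure at $q=3$ (the footnoted computation $\hom(DOD,K_3)=7200<14400=\hom(P_{5,2},K_3)^2$). So there is no proof in the paper to compare against, and the question is whether your proposal itself constitutes a proof. It does not. The reduction in your first two paragraphs is sound and standard: the identity $\frac{1}{\abs{V(G)}}\log \hom(G,K_q)=\log q-\int_0^1 \frac{1}{w}u_G(q,w)\,dw$ is correct, and the sufficiency of the pointwise bounds $u_G\le u_{\mathrm{Pet}}$ (resp.\ $u_G\ge u_{\mathrm{Hea}}$) follows. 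But the decisive step --- actually establishing those pointwise bounds uniformly in $q\ge 4$ and $w\in(0,1)$ via a local optimization --- is exactly where the proposal stops; you acknowledge this yourself (``the main obstacle,'' ``the crux''). Nothing in the write-up verifies the local bound for even a single value of $q$, identifies the mechanism producing the $q\ge 4$ threshold, or shows the claimed symmetrization to a two-parameter marginal family is loss-free. A proof sketch whose central inequality is deferred to ``a plausible route'' is a research program, not a proof.

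Beyond incompleteness, the proposed route has a structural weakness that the paper itself exposes. The pointwise inequality $u_G(q,w)\le u_{\mathrm{Pet}}(q,w)$ for \emph{all} $w\in(0,1)$ is strictly stronger than the conjecture, which concerns only the integrated quantity (equivalently $w=0$); and the paper's main message is precisely that for girth-constrained regular classes, pointwise extremality in the model parameter \emph{fails}: $DOD$ beats $P_{5,2}$ and $G_{14}$ on an interval of fugacities (Theorem~\ref{thr:PPconj2}), and $H_{3,8}$, $H_{4,6}$ fail for large $\lambda$. An analogous crossover in $w$ for the Potts internal energy would sink your pointwise strategy without contradicting the conjecture, so at minimum you would need to argue why no such crossover occurs here, or retreat to bounding the integral directly. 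Two further steps would fail as written: in part~(b), a general $C_4$-free cubic graph need not have tree-like $2$-balls (cycles of length $5$ or $6$ intrude), so the ``clean factorization'' holds only on the Heawood graph itself, and the honest object is an LP relaxation over locally consistent marginals --- whose tightness at the cage is not automatic (the analogous relaxation for the hard-core model at girth $5$ is not tight at the Petersen graph, which is part of why the conjectures of~\cite{PP18} resisted proof and, per this paper, are partly false). And the claim that triangle-freeness gives conditional independence of the three neighbor colors needs the conditioning to include the color of $v$ itself; stated as is, it is imprecise.
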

\end{itemize}

\section*{Acknowledgement}

The authors thank Guillem Perarnau and Will Perkins for updating us on the state and ongoing research on the conjectures in their paper, and suggestions for improvements of our paper.
We also thank P\'{e}ter Csikv\'{a}ri for informing us about~\cite{BSVV08} and sharing ideas which lead to~\cref{sec:app2}.

\paragraph{Open access statement.} For the purpose of open access,
a CC BY public copyright license is applied
to any Author Accepted Manuscript (AAM)
arising from this submission.

\bibliographystyle{abbrv}
\bibliography{ref}

\section*{Appendix}\label{sec: appendix}

\appendix

\section{Details about computer search}\label{sec:app1}

We used a computer to determine the independence polynomial of a graph (and the occupancy fraction) with two independent algorithms. The first algorithm uses the built-in functions in Sage, whereas the second algorithm was implemented in C++.  The latter is a simple, yet effective, backtracking algorithm that recursively enumerates all independent sets of a graph and derives the independence polynomial from this. The algorithm maintains an initially empty set and adds vertices recursively to this set by considering the vertices in an arbitrary order and branching into two possibilities: including the current vertex (if none of its neighbors are already in the set) or excluding the current vertex.

We used the graph generator GENREG~\cite{Meringer99} to exhaustively generate all connected $k$-regular graphs on $n$ vertices with girth at least $g$. We calculate the occupancy fraction for $\lambda \in \{0.1, 0.2, \ldots, 100.0\}$ (or keep track of the critical graphs using~\cref{prop:maj->min_setversion} for minimizing the occupancy fraction). This approach allowed us to find counterexamples for \cite[conj.~2]{PP18} and \cite[conj.]{CR18}. As the number of graphs on $n$ vertices grows rapidly with increasing $n,$ this approach became unfeasible for disproving the other conjectures (\cite[conj.~3-5]{PP18}). Therefore, we restricted the search space further by focusing on bipartite graphs (for maximizing the occupancy fraction) and graphs for which the automorphism group has at most two group orbits (see~\cite{HR20} for more details on their enumeration). The counterexamples that we found have between 14 and 38 vertices.

\section{Connected counterexample to Galvin's conjecture}\label{sec:app2}

We now present a connected counterexample for Galvin's conjecture when $d=3$.  

We will apply a graph product, called \emph{tensor product} (or \emph{categorical product}).\footnote{See also~\cite[Sec.~5]{Zhao17}.}
The tensor product $G \times H$ of two graphs $G$ and $H$ is the graph with vertex set $V(G) \times V(H)$, where $(u,v)$ and $(u',v') \in V(G) \times V(H)$ are adjacent in $G \times H$ if $uu' \in E(G)$ and $vv' \in E(H)$. 
Recursively, one can define $G^n$ as $G \times G^{n-1}$ for every $n \ge 2$.
The important property of the tensor product, making it very useful, is
\begin{equation}\label{eq:id_tensorproduct_hom}
    \hom(G, H_1 \times H_2) = \hom(G, H_1) \hom(G, H_2)
\end{equation}

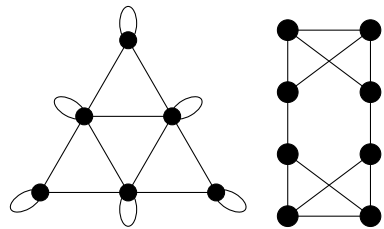
\begin{wrapfigure}{r}{7cm}
\begin{center}
\begin{tikzpicture}[scale=0.45]

\foreach \x in {0,1,2}{

\draw[fill] (120*\x+30:1.5) circle (0.25);
\draw[fill] (120*\x-30:3) circle (0.25);
\draw[rotate=120*\x+30] (0:2) ellipse (0.5cm and 0.25cm);
\draw[rotate=120*\x-30] (0:3.5) ellipse (0.5cm and 0.25cm);
\draw (120*\x+30:1.5) --(120*\x-90:1.5) ;
\draw (120*\x+90:3) --(120*\x-30:3) ;
}

\end{tikzpicture}\quad
\begin{tikzpicture}[scale=0.55]

\foreach \x in {0,1.5,3,4.5}{
    \foreach \y in {0,2}{

    \draw[fill] (\y,\x) circle (0.25);
    }
}
\draw (2,0) --(0,1.5) ;
\draw (0,0) --(2,1.5) ;
\draw (0,3)--(2,4.5) ;
\draw (2,3)--(0,4.5) ;

\draw (0,0) --(2,0)--(2,4.5)--(0,4.5)--(0,0) ;
\end{tikzpicture}
\end{center}
\caption{The graphs $H_0$ (left) and $G$ (right).}\label{fig:H1AndG1} 
\end{wrapfigure}

Let $H_0$ and $G$ be the (looped) complement of a net graph and the $K_4^-$-necklace, shown in~\cref{fig:H1AndG1} on the left and right, respectively. One can verify that $\hom(G,H_0)=58734$, $\hom(K_{3,3},H_0)=3732$, $\hom(G,K_3)=24$, $\hom(K_{3,3},K_3)=42$ and $\hom(K_4,K_3)=0$.

Now define the simple graph $H := H_0^{a} \times K_3$ for any $a \ge 216$. Using the identity~\eqref{eq:id_tensorproduct_hom}, we obtain:
\begin{align*}
    \hom(G,H) &> \max \{ \hom(K_{3,3},H)^{4/3} , \hom(K_{4},H)^{2}\}\\& = \hom(K_{3,3},H)^{4/3}.
\end{align*}

\end{document}